\newtheorem{theorem}{Theorem}[section]
\newtheorem{lemma}[theorem]{Lemma}
\newtheorem{prop}[theorem]{Proposition}
\theoremstyle{definition}
\newtheorem{definition}{Definition}
\newtheorem*{remark}{Remark}
\newcommand{\R}{\ensuremath{\mathbb{R}}}
\newcommand{\Lipa}[1][\gamma]{\ensuremath{\mathcal{C}^#1}} 
\newcommand{\Na}[2][\gamma]{\ensuremath{\left\|#2\right\|_{#1}}} 
\newcommand{\SNa}[2][\gamma]{\ensuremath{\left|#2\right|_{#1}}} 
\newcommand{\Np}[2][\infty]{\ensuremath{\left\|#2\right\|_{L^#1}}}
\newcommand{\NUa}[2][\gamma]{\ensuremath{\left|#2\right|_{1,#1}}} 
\newcommand{\LipUa}[1][\gamma]{\ensuremath{\Lipa[{{1,\gamma}}]}} 
\newcommand{\LPet}[1][\gamma]{\ensuremath{c^{#1}}} 
\newcommand{\LUPet}[1][\gamma]{\ensuremath{c^{1,#1}}} 
\newcommand{\ZPet}{\ensuremath{\lambda^{*}}} 
\newcommand{\Nz}[1]{\ensuremath{\left\|#1\right\|_{*}}} 
\newcommand{\SNz}[1]{\ensuremath{\left|#1\right|_{*}}} 
\title{ Continuity of the solution map of some active scalar
equations in Hölder and Zygmund spaces}
\author{Marc Magaña\thanks{E-mail address: \texttt{marc.magana@uab.cat}}}
\affil{Departament de Matemàtiques -- UAB\\ 
Facultat de Ciències, 08193 -- Bellaterra, Barcelona, Spain}
\begin{document}

\maketitle

\begin{abstract}
We prove that the solution map for a family of non-linear transport equations in $\R^n$, with a velocity field given by the convolution of the density with a kernel that is smooth away from the origin and homogeneous of degree $-(n-1)$, is continuous in both the little Hölder class and the little Zygmund class. For particular choices of the kernel, one recovers well-known equations such as the 2D Euler or the 3D quasi-geostrophic equations.
\end{abstract}

\section{Introduction}
The study of transport equations plays a crucial role in various fields of applied mathematics, particularly in fluid dynamics and mathematical physics. In this paper, we focus on the Cauchy problem for a family of non-linear non-local transport equations given by
\begin{equation}\label{eq:transportEq}\tag{T}
    \left\{
        \begin{aligned}
            &\rho_t+v\cdot \nabla\rho=0,\\
            &v(\cdot,t)=k*\rho(\cdot,t),\\
            &\rho(\cdot,0)=\rho_0,
        \end{aligned}
    \right.
\end{equation}

where $\rho=\rho(x,t):\R^n\times \R\to \R$ is a scalar quantity usually known as the \textit{density}, $v=v(x,t):\R^n\times \R\to \R^n$ is a vector field called the \textit{velocity} and $k:\R^n\setminus\{0\}\to \R^n $ is a homogeneous kernel of degree $-(n-1)$ and smooth away from the origin and relates $\rho$ and $v$ via convolution.

This particular family of equations was studied by Cantero in \cite{cantero2022cgamma} where he proved existence and uniqueness of Hölder regular solutions. Later, Cantero, Mateu, Orobitg and Verdera \cite{cantero2021regularity} proved persistence of boundary smoothness of vortex patches for the case of odd kernels.  Notably, this family encompasses several well-known equations as special cases,  for particular choices of $k$, serving as a generalization of these models. A prime example is the vorticity formulation of the Euler equation in the plane, where $k=\nabla^\perp N$ being $N(x)$ the fundamental solution of the Laplacian. Other remarkable examples are the aggregation equation in $\R^n$ for initial conditions being the characteristic function of some domain (see \cite{bertozzi2012aggregation} for the derivation), here $k=\nabla N$, and the quasigeostrophic equation in $\R^3$, which corresponds to $k=L(\nabla N)$ for $L(x)=(-x_2,x_1,0)$, see \cite{garcia2022time}.

The existing literature on these equations is vast. For the Euler equation see, for instance \cite{chemin1998perfect,constantin2001eulerian,MajdaBertozzi} and for the aggregation \cite{bertozzi2012aggregation, bertozzi2016regularity}. The main focus in these papers was on either existence and uniqueness of Hölder regular solutions or in studying persistance of boundary smoothness of patches. The same happens in \cite{cantero2022cgamma} and \cite{cantero2021regularity}. However, the question of continuity with respect to initial conditions was not explicitly addressed in these works and is rarely studied in this context, with \cite{crippa2015strong} being an interesting exception.

According to Hadamard, a Cauchy problem is locally well-posed in a Banach space $X$ if for any initial data in $X$ there exists a unique solution that persists for some $T > 0$ in the space $C([0, T), X)$ and depends continuously on the initial data. This notion of well-posedness is rather strong and may not always be suitable, as highlihted by Kato \cite{kato1983cauchy}.  Recent works have explored ill-posedness in the sense of Hadamard of the Euler equations under certain initial conditions, providing a comprehensive survey on this topic \cite[Section 1]{misiolek2018continuity}.

The authors of the above mentioned article, Misio{\l}ek and Yoneda, proved well-posedness in the sense of Hadamard for the Euler equations in 2D and 3D for velocities in the little Hölder spaces $\LUPet,$ with $0<\gamma<1$. They also provided an example for the 3D Euler equation (which is worth noting that is not a transport equation) that shows that the problem is ill-posed, in general, for  $v\in\LipUa,$ in the sense of Hadamard. In this paper, we follow and extend their work to prove well-posedness for \eqref{eq:transportEq} in the case that the initial data is in $\LPet$, for $0<\gamma<1$. It is important to point out that we are changing the focus from the velocity field to the density, and that is the reason why the space we study is $\LPet$ rather than $\LUPet$. Moreover, we also establish Hadamard well-posedness for initial data in the little Zygmund class. 

The main theorem of this paper is the following:
\begin{theorem}\label{th:enunciatGeneral}
Let $k\in C^3(\R^n\setminus\{0\};\R^n)$ a kernel homogeneous of degree $-(n-1).$ Then
\begin{enumerate}[label=(\textbf{\Alph*})]
    \item \label{th:apartatA} For $0<\gamma < 1,$ and  $\rho_0\in \LPet_c(\R^n;\R),$ there exist $T>0$ and a unique weak solution $\rho$ to the transport equation \eqref{eq:transportEq} such that the map $\rho_0\to \rho$ is continuous from $\LPet(\R^n)$ to $C([0,T),\LPet(\R^n)).$ 
    \item \label{th:apartatB} The result in \ref{th:apartatA} is also true if one changes $\LPet$ for the little Zygmund class, $\lambda^*.$
\end{enumerate} 
\end{theorem}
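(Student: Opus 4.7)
The plan is to proceed Lagrangian-style: construct the solution from the flow $\Phi_t$ of $v=k*\rho$, so that $\rho(\cdot,t) = \rho_0\circ \Phi_t^{-1}$. Because $k$ is homogeneous of degree $-(n-1)$ and $C^3$ off the origin, $\nabla v$ is, up to bounded terms, a Calderón–Zygmund operator applied to $\rho$, hence bounded on $\Lipa$; in particular $v(\cdot,t)\in\LipUa$ whenever $\rho(\cdot,t)\in\Lipa_c$, so the flow is well-defined and bi-Lipschitz. Existence and uniqueness in $\Lipa_c$ is already provided by \cite{cantero2022cgamma}; what remains for part \ref{th:apartatA} is (i) propagation of the little-Hölder condition by the evolution and (ii) continuity of the solution map in the $\Lipa$ norm.

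For propagation, I would use the characterization $f\in\LPet$ iff $f\in\Lipa$ and $\omega_\gamma(f,r):=\sup_{0<|x-y|\le r}|f(x)-f(y)|/|x-y|^\gamma \to 0$ as $r\to 0^+$. The bi-Lipschitz bound on $\Phi_t$ gives
\begin{equation*}
    \omega_\gamma\!\bigl(\rho_0\circ\Phi_t^{-1},\,r\bigr)\le \Np{\nabla \Phi_t^{-1}}^\gamma\, \omega_\gamma\!\bigl(\rho_0,\,\Np{\nabla \Phi_t^{-1}}\,r\bigr),
\end{equation*}
so $\rho(\cdot,t)\in\LPet$ for each $t\in[0,T)$; continuity in $t$ then follows from the equation and the $\Lipa$-bounds on $v$ and $\nabla v$.

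The crux is the continuity of the solution map, and this is where the main obstacle lies. Given $\rho_0^j\to \rho_0$ in $\LPet$, denote by $\rho^j, \rho$ and $\Phi^j_t, \Phi_t$ the corresponding solutions and flows, and decompose
\begin{equation*}
    \rho^j - \rho = (\rho_0^j-\rho_0)\circ (\Phi_t^j)^{-1} + \rho_0\circ(\Phi_t^j)^{-1} - \rho_0\circ \Phi_t^{-1}.
\end{equation*}
The first summand is $O\!\bigl(\Na{\rho_0^j-\rho_0}\bigr)$ in $\Lipa$. For the second, density of $C_c^\infty$ in $\LPet$ — which is precisely the place where the \emph{little} structure is crucial, since smooth functions are not dense in the full class $\Lipa$ — allows me to choose a smooth $\tilde\rho_0^\varepsilon$ with $\Na{\rho_0-\tilde\rho_0^\varepsilon}<\varepsilon$ and further split into $\tilde\rho_0^\varepsilon\circ (\Phi_t^j)^{-1} - \tilde\rho_0^\varepsilon\circ \Phi_t^{-1}$, which tends to $0$ in $\Lipa$ as $j\to\infty$ using the higher regularity of $\tilde\rho_0^\varepsilon$ together with the convergence of the flows $\Phi_t^j \to \Phi_t$ in $\Lipa$, plus a remainder $(\rho_0-\tilde\rho_0^\varepsilon)\circ(\Phi_t^j)^{-1} - (\rho_0-\tilde\rho_0^\varepsilon)\circ\Phi_t^{-1}$, which is uniformly $O(\varepsilon)$ in $j,t$. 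Letting $j\to\infty$ and then $\varepsilon\to 0$ finishes the argument. The technical ingredient inside is the stability of the flow, obtained by a Grönwall argument on $v^j-v = k*(\rho^j-\rho)$ together with the $\Lipa$-boundedness of the associated singular integral.

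Part \ref{th:apartatB} is handled by the same architecture, with the Hölder seminorm replaced by the Zygmund seminorm $\SNz{f}=\sup_{h\neq 0}\Np{f(\cdot+h)+f(\cdot-h)-2f(\cdot)}/|h|$ and $\ZPet$ realized as the closure of $C_c^\infty$ in $\Nz{\cdot}$. The two analytic inputs to re-derive are the boundedness of $\rho\mapsto\nabla v$ on $\ZPet$ (standard Calderón–Zygmund theory on Zygmund spaces) and a composition estimate showing that $\rho_0\circ\Phi_t^{-1}\in\ZPet$ whenever $\rho_0\in\ZPet$ and $\Phi_t$ is sufficiently regular. The latter is the main technical obstacle of the Zygmund case: expanding the second difference of $\rho_0\circ\Phi_t^{-1}$ produces terms involving $\nabla\Phi_t^{-1}$ and its first-order oscillation, and these must be absorbed using only the available $\LUPet$-type regularity of the flow. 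Once those two estimates are in hand, the mollification-and-splitting scheme above transfers verbatim and yields continuity in $C([0,T),\ZPet)$.
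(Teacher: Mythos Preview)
Your mollification-and-splitting scheme for continuity of the solution map is precisely the paper's argument, and your outline for part~\ref{th:apartatB} (use part~\ref{th:apartatA} to obtain the flow in $\LUPet$, then prove a Zygmund composition estimate of the form $\SNz{f\circ X}\lesssim \SNz{f}(1+\NUa{X})$) also matches. Where your sketch is imprecise --- and where the paper does something sharper --- is the flow stability step.

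You propose a Gr\"onwall argument on $v^j-v=k*(\rho^j-\rho)$. As written this is circular: $\rho^j-\rho$ is exactly the quantity you are trying to control, and it already depends on the inverse flows. The paper sidesteps this by staying in Lagrangian coordinates throughout: the flow solves $\frac{dX}{dt}=F_{\rho_0}(X)$ with $\rho_0$ entering only as a parameter, Picard--Lindel\"of is run directly in the Banach space $\LUPet$ (Propositions~\ref{prop:MapsIntoCa} and~\ref{prop:Lip}), and continuous dependence on parameters for Banach-space ODEs then yields $X^j\to X$ in $\LUPet$ from $\rho_0^j\to\rho_0$ in $\LPet$, with no Eulerian quantity in the loop. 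One can of course recast this as a Gr\"onwall inequality, but on $\NUa{X^j-X}$, using the local Lipschitz bound on $F_{\rho_0}$ together with the linearity of $\rho_0\mapsto F_{\rho_0}$ --- not on $v^j-v$.

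A related point you underplay: the middle term of your splitting involves the \emph{inverse} flows, and the inversion map $X\mapsto X^{-1}$ is not continuous on the full space $\LipUa$. This is a second place --- beyond the density of smooth functions --- where the little structure is essential. The paper invokes the Misio{\l}ek--Yoneda result (Lemma~\ref{lemma:contInversa}) that inversion and composition are continuous on $U_\delta\subset\LUPet$; applying it requires the $\LUPet$-convergence of the flows obtained above, not merely the $\Lipa$-convergence you state.
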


The outline of this paper is as follows. In Section \ref{sec:preliminaries}, we recall the definitions of Hölder spaces and present some relevant results concerning these spaces and Calderón-Zygmund operators acting on them. In Section \ref{sec:well-posedness}, we prove part \ref{th:apartatA} of Theorem \ref{th:enunciatGeneral}, that is, we prove the result for the little Hölder spaces. Subsequently in Section \ref{sec:zygmund} we extend this result to the little Zygmund class, thereby proving part \ref{th:apartatB}.

\section{Preliminaries}\label{sec:preliminaries}
First, we recall the definitions of Hölder and little Hölder spaces. 

\begin{definition}\label{def:holder}
Given $0<\gamma<1$ and $f:\R^n\to \R$ let $$\Np{f}=\sup_{x\in \R^n} |f(x)| \text{ and } \SNa{f}=\sup_{\substack{x,y\in\R^n\\ x\neq y}} \frac{|f(x)-f(y)|}{|x-y|^\gamma}.$$
We define the norm $$\Na{f}\coloneqq \Np{f}+\SNa{f}.$$
For $F(x)=(f_1(x),\dots,f_d(x)):\R^n\to\R^d$ we define $\Na{F}\coloneqq \sup_{i=1,\dots, d} \Na{f_i}$ and the Hölder space $$\Lipa (\R^n; \R^d) \coloneqq \left\{ f: \R^n\to \R^d \;|\;\Na{f}<\infty\right\}.$$
We also define $$\NUa{F}=\Np{F}+\Np{D F} + \SNa{D F},$$ and the Hölder space $\LipUa(\R^n;\R^d)$ consisting of the functions $f:\R^n\to\R^d$ such that $\NUa {f} < \infty$. 

Let $\LPet(\R^n;\R^d)$ denote the closed subspace of $\Lipa(\R^n;\R^d)$ consisting of those functions satisfying the vanishing condition 
\begin{equation} \label{eq:vanishingCond}
    \lim_{h\to 0}\sup_{\substack{x,y\in\R^n\\ 0<|x-y|<h}} \frac{|f(x)-f(y)|}{|x-y|^\gamma}=0.
\end{equation}

Likewise, let $\LUPet(\R^n;\R^d)$ denote the closed subspace of $\LipUa(\R^n;\R^d)$ consisting of those functions whose derivatives satisfy the vanishing condition. We call $\LPet$ and $\LUPet$ little Hölder spaces. It is easy to see that $c^\gamma(\R^n)$ is the clousure of $C^\infty(\R^n)$ in the usual Hölder norm of $C^\gamma(\R^n)$.
\end{definition}

Let us now recall some well known properties for elements of the Hölder spaces as well as bounds for Calderón-Zygmund operators acting
on them.
\begin{lemma}\label{lemma:propietats}
Let $f,g\in\Lipa,$ $0<\gamma<1.$ Then 
\begin{align}
    \SNa{fg}&\leq \Np{f}\SNa{g}+\SNa{f}\Np{g} \label{eq:SNprod} \\
    \Na{fg}&\leq \Na{f}\Na{g} \label{eq:SNaAlgebra}
\end{align}
If moreover $X$ is a smooth invertible transformation in $\R^n$ satisfying 
$$|\det DX(\alpha)|\geq c_1 > 0,$$
then there exists $c>0$ such that
\begin{align}
    \Na{(DX)^{-1}}&\leq c\Na{DX}^{2n-1}\\
    \NUa{X^{-1}}&\leq c \NUa{X}^{2n-1}\\
    \SNa{f\circ X}&\leq \SNa{f}\Np{DX}^\gamma \label{eq:SNaComp}\\ 
    \Na{f\circ X}&\leq \Na{f}(1+\NUa{X}^\gamma ) \label{eq:NaComp}\\
    \Na{f\circ X^{-1}}&\leq \Na{f}(1+\NUa{X}^{\gamma(2n-1)}) \label{eq:NaCompInversa}
\end{align}
\end{lemma}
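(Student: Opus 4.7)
The plan is to establish the seven inequalities in the order they are stated, since later estimates rely on earlier ones. The product inequality \eqref{eq:SNprod} follows from the standard splitting
$f(x)g(x)-f(y)g(y)=f(x)(g(x)-g(y))+g(y)(f(x)-f(y))$,
dividing by $|x-y|^\gamma$ and taking the supremum. Combining this with the trivial bound $\Np{fg}\leq \Np{f}\Np{g}$ then yields the algebra estimate \eqref{eq:SNaAlgebra}.

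For the bound on $(DX)^{-1}$, I will use Cramer's rule $(DX)^{-1}=(\det DX)^{-1}\mathrm{adj}(DX)$ together with iterated applications of \eqref{eq:SNaAlgebra}. Each entry of the adjugate matrix is a polynomial of degree $n-1$ in the entries of $DX$, so $\Na{\mathrm{adj}(DX)}\leq c\Na{DX}^{n-1}$. For the reciprocal of the determinant, the lower bound $|\det DX|\geq c_1$ and the identity $1/a-1/b=(b-a)/(ab)$ reduce the problem to estimating the Hölder norm of $\det DX$, which is a degree-$n$ polynomial in the entries of $DX$ and therefore has $\Na{\det DX}\leq c\Na{DX}^n$. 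Multiplying the two contributions via \eqref{eq:SNaAlgebra} produces the claimed exponent $2n-1$.

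To pass from $(DX)^{-1}$ to $X^{-1}$, I differentiate the identity $X\circ X^{-1}=\mathrm{id}$ to obtain $D(X^{-1})(y)=(DX)^{-1}(X^{-1}(y))$. The sup-norm bound $\Np{D(X^{-1})}\leq \Np{(DX)^{-1}}\leq C\Np{DX}^{n-1}$ follows again from Cramer's rule, and then combining with the composition inequality \eqref{eq:SNaComp} (applied to $f=(DX)^{-1}$ and the map $X^{-1}$) produces a Hölder bound on $D(X^{-1})$ in terms of $\Na{DX}$, which is controlled by $\NUa{X}$. The composition inequality \eqref{eq:SNaComp} itself is immediate from the mean value theorem: $|X(x)-X(y)|\leq \Np{DX}|x-y|$, hence $|f\circ X(x)-f\circ X(y)|\leq \SNa{f}\Np{DX}^\gamma|x-y|^\gamma$. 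Then \eqref{eq:NaComp} follows by adding $\Np{f\circ X}\leq \Np{f}$ and bounding $\Np{DX}\leq \NUa{X}$, and \eqref{eq:NaCompInversa} is obtained by applying \eqref{eq:NaComp} with $X^{-1}$ in place of $X$ and inserting the bound for $\NUa{X^{-1}}$.

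The main bookkeeping obstacle is the exponent tracking in the Cramer step: one has to carefully count $n$ factors of $\Na{DX}$ from the determinant and $n-1$ from the adjugate, and then propagate these exponents through the chain-rule identity for $D(X^{-1})$ without introducing circularity (the trick being that the $\Np{D(X^{-1})}$ estimate used as input to \eqref{eq:SNaComp} comes from the pointwise Cramer bound, not from the $\NUa{X^{-1}}$ bound we are proving). Beyond this, no step is conceptually difficult; the content of the lemma is organizational.
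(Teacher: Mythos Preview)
The paper does not include its own proof of this lemma; it simply refers the reader to \cite[Lemmas 4.1--4.6]{MajdaBertozzi}. Your sketch is correct and follows exactly the standard route taken in that reference---the product splitting for \eqref{eq:SNprod}--\eqref{eq:SNaAlgebra}, Cramer's rule with polynomial bookkeeping for $\Na{(DX)^{-1}}$, the chain-rule identity $D(X^{-1})=(DX)^{-1}\circ X^{-1}$ combined with the pointwise Cramer bound to avoid circularity, and the mean-value inequality for compositions---so there is nothing to compare.
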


\begin{lemma}\label{lemma:SIO}
    Let $k: \R^n \to \R,$ $k\in C^3(\R^n\setminus \{0 \})$ a kernel homogeneous of degree $-(n-1).$ That is, $$k(\lambda x) = \frac{1}{\lambda^{n-1}}k(x), \quad \forall x\neq 0 \text{ and }\forall \lambda >0.$$
    Let $P=\partial_ik, i=1,\dots,n.$ Set $$Tf(x)=\int_{\R^n} k(x-x')f(x')dx' \text{ and } Sf(x)= p.v. \int_{\R^n} P(x-x')f(x')dx',$$ where $p.v.$ denotes, as usual, the Cauchy principal value.

    For $0<\gamma < 1$ let $f\in \Lipa_c(\R^n;\R).$ Set $R^n\coloneqq m(supp(f))<\infty,$ being $m$ the Lebesgue measure. Then, there exists a constant $c,$ independent of $f$ and $R,$ such that 
    \begin{align}
        \Np{Tf}&\leq cR \Np{f}\\
        \Np{Sf}&\leq c\left\{\SNa{f}\varepsilon^\gamma +\max\left(1,\ln{\frac{R}{\varepsilon}}\right)\Np{f}\right\}, \quad \forall \varepsilon>0,\label{eq:SIOepsilon}\\
        \SNa{Sf}&\leq c \SNa{f}.    \label{eq:SIOsna}     
    \end{align}
\end{lemma}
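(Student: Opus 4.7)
The plan is to prove the three bounds in turn, the common ingredient being the mean-zero property of $P$ on spheres. Since $k$ is homogeneous of degree $-(n-1)$, the surface integral $\int_{|x|=r}k(x)(x_i/|x|)\,d\sigma$ is independent of $r$, so the divergence theorem gives $\int_{\delta\leq|x|\leq r}\partial_i k\,dx=0$ for all $0<\delta<r$; passing to polar coordinates this reads $\int_{S^{n-1}}P\,d\sigma=0$, and in particular $p.v.\int_{|w|<\varepsilon}P(w)\,dw=0$ for every $\varepsilon>0$.

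For the $\Np{Tf}$ bound, homogeneity gives $|k(y)|\leq c|y|^{-(n-1)}$, hence $|Tf(x)|\leq\Np{f}\int_{supp(f)}|x-z|^{-(n-1)}\,dz$. By rearrangement, the latter is maximized when $supp(f)$ is replaced by a ball of the same measure $R^n$ centered at $x$, giving $\int_0^{cR}r^{-(n-1)}r^{n-1}\,dr=cR$.

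For the bound on $\Np{Sf}$ I would split the integration at $|z-x|=\varepsilon$. On the near part, the cancellation $p.v.\int_{|z-x|<\varepsilon}P(x-z)\,dz=0$ lets me replace $f(z)$ by $f(z)-f(x)$ inside the principal value, yielding $c\,\SNa{f}\int_0^\varepsilon r^{\gamma-1}\,dr=c\,\SNa{f}\,\varepsilon^\gamma$. On the far part I use $|f|\leq\Np{f}$ and again rearrangement on a set of measure at most $R^n$, so the integral is dominated by $\int_\varepsilon^{R_1}s^{-1}\,ds$ with $R_1^n\leq R^n+\varepsilon^n$, which in all cases is at most $c\max(1,\ln(R/\varepsilon))$.

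For the Hölder seminorm bound, the classical Calderón--Zygmund estimate, I set $h=|x-y|$ and decompose both $Sf(x)$ and $Sf(y)$ at distance $2h$ from their respective centers. Each near integral, after subtracting $f(x)$ (resp.\ $f(y)$) via the cancellation property, contributes at most $c\,\SNa{f}\,h^\gamma$. Over the common far region $\{|z-x|\geq 2h\}\cap\{|z-y|\geq 2h\}$ I combine the mean value bound $|P(x-z)-P(y-z)|\leq c\,h\,|z-x|^{-(n+1)}$ with the Hölder estimate $|f(z)-f(x)|\leq\SNa{f}\,|z-x|^\gamma$, obtaining $c\,h\,\SNa{f}\int_{2h}^\infty r^{\gamma-2}\,dr=c\,\SNa{f}\,h^\gamma$ (convergent precisely because $\gamma<1$). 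The main technical obstacle I expect is the symmetric-difference region between the two far domains, together with the leftover $f(x)$-corrections produced by the subtraction trick: handled crudely these yield only $\Np{f}$-type control. They should be tamed by treating the $x$- and $y$-sides symmetrically and appealing once more to the mean-zero property of $P$ on spherical shells, so that the constant-in-$z$ contributions cancel pairwise and only $\SNa{f}$-controlled terms survive.
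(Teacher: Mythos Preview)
The paper does not give its own proof of this lemma but defers to \cite[Lemmas~4.5--4.6]{MajdaBertozzi}; your sketch is precisely the standard argument found there, and it is correct. In particular, your closing remark is exactly the point: since $\int_{2h\le|z-x|\le M}P(x-z)\,dz=0$ for every $M>2h$ by the mean-zero of $P$ on spheres, the $f(x)$-correction terms coming from the subtraction in the far region vanish identically, and the symmetric-difference annuli (where $|z-x|\sim h$ and $|f(z)-f(x)|\le\SNa{f}h^\gamma$) contribute only $\SNa{f}$-controlled pieces.
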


The proofs of Lemmas \ref{lemma:propietats} and \ref{lemma:SIO} can be found in \cite[p. 159 - 163, Lemmas 4.1, 4.2, 4.3, 4.5 and 4.6]{MajdaBertozzi}. It is worth noting that more hypothesis on the kernels $k$ and $P$ are required on \cite{MajdaBertozzi} but they are not needed (see \cite[Remark 5]{cantero2022cgamma}). 

We also state here the following lemma about distributional derivatives of the kernels considered on this work. For its proof, refer to \cite[Lemma 8]{cantero2022cgamma}

\begin{lemma} \label{lemma:diffKernel}
Given $k=(k_1,\dots,k_n): \R^n\setminus \{0\}\to \R, k\in C^3(\R^n\setminus \{0\})$ homogeneous of degree $-(n-1),$ we have, distributionally, for $i,j\in\{1,\dots, n\}$ $$\partial_i k_j = p.v. \partial_i k_j+c_{ij}\delta_0,$$ where $c_{ij}=\int_{\partial B(0,1)}k_j(s)s_id\sigma(s)$ and $\delta_0$ is the Dirac measure at the origin.
\end{lemma}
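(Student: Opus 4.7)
The plan is the standard distribution-theoretic truncation argument: pair $\partial_i k_j$ against a test function $\varphi\in C_c^\infty(\R^n)$, remove a ball of radius $\varepsilon$ around the origin, and see what the boundary contributes as $\varepsilon\to 0$. Since $k_j$ is homogeneous of degree $-(n-1)>-n$, it is locally integrable, so one has
$$\langle \partial_i k_j,\varphi\rangle \;=\; -\int_{\R^n} k_j(x)\,\partial_i\varphi(x)\,dx \;=\; -\lim_{\varepsilon\to 0}\int_{|x|>\varepsilon} k_j(x)\,\partial_i\varphi(x)\,dx,$$
where the inner part contributes $O(\varepsilon)$ since $|k_j(x)|\le C|x|^{-(n-1)}$.

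On $\{|x|>\varepsilon\}$ the function $k_j$ is smooth, and the divergence theorem applied to the vector field $k_j\varphi\,e_i$ (with outward normal $-x/|x|$ on the inner boundary $|x|=\varepsilon$) yields
$$-\int_{|x|>\varepsilon} k_j\,\partial_i\varphi\,dx \;=\; \int_{|x|>\varepsilon} (\partial_i k_j)\,\varphi\,dx \;+\; \int_{|x|=\varepsilon} k_j(x)\,\varphi(x)\,\frac{x_i}{|x|}\,d\sigma(x).$$
Parametrizing $x=\varepsilon s$ with $|s|=1$ and using $k_j(\varepsilon s)=\varepsilon^{-(n-1)}k_j(s)$ together with $d\sigma(x)=\varepsilon^{n-1}d\sigma(s)$, the boundary term becomes $\int_{|s|=1} k_j(s)\,\varphi(\varepsilon s)\,s_i\,d\sigma(s)$, which converges to $c_{ij}\,\varphi(0)$ as $\varepsilon\to 0$ by dominated convergence.

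The remaining step is to check that $\int_{|x|>\varepsilon}(\partial_i k_j)\varphi\,dx$ converges to the principal value as $\varepsilon\to 0$, which reduces to the cancellation identity $\int_{|s|=1}(\partial_i k_j)(s)\,d\sigma(s)=0$. This I would obtain by applying the divergence theorem to $k_j\,e_i$ on an annulus $\{\varepsilon<|x|<R\}$: the volume integral equals $\ln(R/\varepsilon)\int_{|s|=1}(\partial_i k_j)(s)\,d\sigma(s)$ by homogeneity and polar coordinates, while the two boundary fluxes are \emph{independent of the radius} (again by the same homogeneity calculation) and hence cancel. Assembling the three pieces identifies the distributional limit as $p.v.\,\partial_i k_j + c_{ij}\delta_0$, which is the statement of the lemma. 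The only delicate point is keeping the signs of the boundary normals consistent and justifying the cancellation that makes the principal value well defined; once these are in place, the rest is routine.
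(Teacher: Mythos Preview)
Your argument is correct and is exactly the standard truncation--integration-by-parts computation one expects here: the sign bookkeeping for the inner boundary normal, the homogeneity scaling that turns the surface term into $c_{ij}\varphi(0)$, and the divergence-theorem trick on an annulus to force the spherical mean of $\partial_i k_j$ to vanish are all handled correctly.

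There is nothing to compare against in the paper itself: the author does not prove this lemma but simply cites \cite[Lemma 8]{cantero2022cgamma}. Your write-up therefore supplies what the paper omits, and the approach is the same one found in the cited reference (and in standard treatments of distributional derivatives of homogeneous kernels).
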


\section{Local well-posedness in \texorpdfstring{$\LPet$}{cg}} \label{sec:well-posedness}
In this section, we establish the main result of this paper concerning the well-posedness of the Cauchy problem \eqref{eq:transportEq} for functions in the little Hölder spaces $\LPet$ with $0 < \gamma < 1$. That is, we prove part \ref{th:apartatA} of Theorem \ref{th:enunciatGeneral}.

As in the case of Euler equation, a good way to prove an existence and uniqueness result is by dealing with an, in some sense, equivalent equation rather than the one presented in Theorem \ref{th:enunciatGeneral}. To introduce this equivalent equation, we first need to define the \textit{flow map}.

Given a velocity field $v$ and a point $\alpha\in\R^n$ we define, whenever it is well-defined, the \textit{flow map} or the \textit{particle trajectory map}, $X(\alpha,\cdot):\R\to\R^n$ as the solution of the ordinary differential equation (ODE)
\begin{equation*}
    \left\{
        \begin{aligned}
            &\frac{d}{dt}X(\alpha,t)=v(X(\alpha,t),t),\\
            &X(\alpha,0)=\alpha.
        \end{aligned}
    \right.
\end{equation*}

This map indicates the position at time t of the particle that was initially at
$\alpha$ and that has moved according to the velocity field at every moment. If $\rho(\cdot,t)$ is sufficiently smooth, a straightforward computation shows that $\rho(X(\alpha,t),t)=\rho_0(\alpha).$ Thus, it can be said that $\rho$ is transported with the flow defined by $v$, which justifies calling \eqref{eq:transportEq} a transport equation.  

We can focus then on proving existence, uniqueness and regularity for the ODE for the particle trajectory map $X$, which after a change of variables is: 
$$\frac{d X}{d t}(\alpha,t)=  \int_{R^n} k(X(\alpha,t)-X(\alpha',t))\rho_0(\alpha')\det\left(DX(\alpha',t)\right) d\alpha '.$$

A standard way to prove existence and uniqueness for an ODE is to apply
Picard-Lindelöf’s theorem, which can be stated as follows:

\begin{theorem}[Picard-Lindelöf]\label{th:Picard}
    Let $O\subseteq B$ be an open subset of a Banach space $B$ and let $F:O\to B$ be a locally Lipschitz continuous mapping. Then, given $X_0\in O,$ there exists a time $T>0$ such that the ordinary differential equation $$\frac{d X}{d t}=F(X), \quad X(\cdot,t=0)=X_0,$$ has a unique (local) solution $X\in C^1((-T,T); O).$
\end{theorem}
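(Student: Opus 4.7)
The plan is to reduce the ODE to a fixed-point problem via the usual Picard iteration and then apply the Banach fixed-point theorem. First I would recast the initial-value problem as an integral equation: a continuous function $X:(-T,T)\to O$ with $X(0)=X_0$ solves the ODE if and only if it satisfies
\begin{equation*}
X(t)=X_0+\int_0^t F(X(s))\,ds,
\end{equation*}
where the integral is a $B$-valued Bochner integral (and since $F\circ X$ is continuous into $B$, this reduces to a Riemann integral). Because $O$ is open, I can fix $r>0$ with the closed ball $\overline{B}_B(X_0,r)\subseteq O$; the hypothesis then gives a Lipschitz constant $L$ for $F$ on this ball and a bound $M=\sup_{Y\in\overline{B}_B(X_0,r)}\|F(Y)\|_B<\infty$.

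Next I would introduce the complete metric space
\begin{equation*}
E_T=\bigl\{X\in C([-T,T];B):X(0)=X_0,\ \|X(t)-X_0\|_B\leq r\ \forall t\bigr\},
\end{equation*}
endowed with the sup norm, and define the Picard operator $\Phi(X)(t)=X_0+\int_0^t F(X(s))\,ds$. A direct estimate gives $\|\Phi(X)(t)-X_0\|_B\leq M|t|$ and $\|\Phi(X)(t)-\Phi(Y)(t)\|_B\leq L|t|\,\|X-Y\|_\infty$. Choosing $T>0$ small enough that $MT\leq r$ and $LT<1$ makes $\Phi$ a contraction from $E_T$ into itself, so the Banach fixed-point theorem produces a unique $X\in E_T$ with $\Phi(X)=X$.

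Finally, I would upgrade the regularity: since $s\mapsto F(X(s))$ is continuous into $B$, the fundamental theorem of calculus for Banach-valued integrals shows that the fixed point $X$ is differentiable with $X'(t)=F(X(t))$, hence $X\in C^1((-T,T);O)$ and solves the ODE. For uniqueness across all solutions (not just those in $E_T$), a standard Grönwall argument on $\|X(t)-\widetilde X(t)\|_B\leq L\int_0^t\|X(s)-\widetilde X(s)\|_B\,ds$ forces any two local solutions to coincide on their common interval of existence. The only mildly subtle point is the Banach-valued integration theory needed to justify the integral equation and the fundamental theorem of calculus; once that is in place, the argument is the same as in the finite-dimensional case, so I do not expect a serious obstacle.
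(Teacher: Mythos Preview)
Your proposal is the standard Picard iteration proof and is correct. However, the paper does not actually prove Theorem~\ref{th:Picard}: it is stated as a classical result (``can be stated as follows'') and then applied, so there is no proof in the paper to compare against.
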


To apply Theorem \ref{th:Picard}, we first need an equation of type $\frac{d X}{d t}=F(X)$, which we have for 
\begin{equation}\label{def:Functional}
F(X(\alpha,t)) \coloneqq \int_{R^n} k(X(\alpha,t)-X(\alpha',t))\rho_0(\alpha')\det\left(DX(\alpha',t)\right) d\alpha ' 
\end{equation}

We also need a Banach space $B$ and an open subset of $B$ such that $X$ belongs to it. We take, as in \cite{misiolek2018continuity}, $B=\LUPet(\R^n;\R^n)$ and the open set  
\begin{equation} \label{def:OpenSet}
    U_\delta = \left\{ X: R^n \to R^n : X= e+\varphi_X, \varphi_X\in\LUPet(\R^n) \text{ and } \NUa{\varphi_X}<\delta \right\}, 
\end{equation} where $e(x) = x$ and $\delta>0$ is chosen small enough so that 
\begin{equation}
    \inf_{x\in \R^n} \det DX(x)>\frac{1}{2}.
\end{equation}
Clearly, $U_\delta$ can be identified with an open ball centered at the origin in $\LUPet(\R^n;\R^n).$

We are now ready to check the hypothesis in Picard-Lindelöf's theorem. We split them into the two following propositions.

\begin{prop} \label{prop:MapsIntoCa}
Let $U_\delta$ as defined in \eqref{def:OpenSet}. Then, the functional $F$ defined in \eqref{def:Functional} maps $U_\delta$ to $\LUPet(\R^n;\R^n).$
\end{prop}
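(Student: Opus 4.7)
The plan is to exploit the change of variables $y = X(\alpha')$ in the integral defining $F$. Since every $X \in U_\delta$ is a global $C^{1,\gamma}$ diffeomorphism of $\R^n$ with $\det DX > 1/2$, the substitution absorbs the Jacobian and yields
\begin{equation*}
F(X)(\alpha) = \int_{\R^n} k(X(\alpha) - y)\,\rho_0(X^{-1}(y))\,dy = (k * \tilde\rho)\bigl(X(\alpha)\bigr),
\end{equation*}
where $\tilde\rho := \rho_0 \circ X^{-1}$. Setting $v := k * \tilde\rho$, the task reduces to showing first that $v \in \LUPet(\R^n;\R^n)$, and then that the composition $v \circ X$ remains in $\LUPet$.

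First I would verify that $\tilde\rho$ belongs to $\LPet_c(\R^n)$. By Lemma \ref{lemma:propietats}, $X^{-1} \in \LUPet$ with $\NUa{X^{-1}}$ controlled by $\NUa{X}$; in particular $X^{-1}$ is globally Lipschitz with a constant $L$ depending only on $\delta$. The support of $\tilde\rho$ lies in the compact set $X(\mathrm{supp}\,\rho_0)$, the $\Lipa$-norm is controlled by \eqref{eq:NaCompInversa}, and the vanishing Hölder condition for $\tilde\rho$ is inherited from the one for $\rho_0$ via $|X^{-1}(x) - X^{-1}(y)| \leq L\,|x-y|$.

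Next I would establish that $v \in \LUPet$ using Lemmas \ref{lemma:SIO} and \ref{lemma:diffKernel}. The $L^\infty$ bound on $v$ is immediate from Lemma \ref{lemma:SIO}. For the derivatives, Lemma \ref{lemma:diffKernel} gives the distributional identity $\partial_i v_j = S_{ij}\tilde\rho + c_{ij}\tilde\rho$, where $S_{ij}$ is the principal-value singular integral with kernel $\partial_i k_j$. The estimate \eqref{eq:SIOepsilon} (optimized in $\varepsilon$) bounds $\Np{Dv}$, while \eqref{eq:SIOsna} bounds $\SNa{Dv}$ by $c\,\SNa{\tilde\rho}$. To upgrade this to the vanishing Hölder condition for $Dv$, I would approximate $\tilde\rho$ in $\Lipa$-norm by smooth compactly supported functions $\rho^{(m)}$, which is possible since $\tilde\rho \in \LPet = \overline{C^\infty}^{\,\Lipa}$; for each $m$, $S_{ij}\rho^{(m)}$ is smooth and decays at infinity, and so satisfies \eqref{eq:vanishingCond}, while $\SNa{S_{ij}(\tilde\rho - \rho^{(m)})} \leq c\,\SNa{\tilde\rho - \rho^{(m)}} \to 0$. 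Closedness of $\LPet$ in $\Lipa$ then gives $Dv \in \LPet$.

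Finally, the chain rule gives $D(v \circ X) = (Dv \circ X)\cdot DX$. The composition $Dv \circ X$ stays in $\LPet$ because $X$ is Lipschitz (transferring the vanishing condition by the same argument as above) and \eqref{eq:NaComp} controls the norm; writing $DX = I + D\varphi_X$ with $D\varphi_X \in \LPet$ by hypothesis, the product $(Dv \circ X) \cdot DX$ lies in $\Lipa$ by \eqref{eq:SNaAlgebra} and remains in $\LPet$ since both factors do. Combined with the $L^\infty$ bounds on $v$ and $Dv$, this yields $F(X) = v \circ X \in \LUPet$. The main technical obstacle is the density argument in the third paragraph: Lemma \ref{lemma:SIO} is stated at the level of the full Hölder space $\Lipa$, and the vanishing condition \eqref{eq:vanishingCond} defining $\LPet$ must be propagated by hand through $S_{ij}$; everything else is routine norm bookkeeping.
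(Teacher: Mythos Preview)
Your proof is correct and follows essentially the same route as the paper: the same change of variables $y=X(\alpha')$ to write $F(X)=(k*\tilde\rho_0)\circ X$, the same use of Lemma~\ref{lemma:diffKernel} to identify $Dv$ as a Calder\'on--Zygmund operator plus a multiple of $\tilde\rho_0$, and the same appeal to Lemma~\ref{lemma:SIO} for the $L^\infty$ and H\"older bounds. The only structural difference is that you first show $v=k*\tilde\rho_0\in\LUPet$ and then compose with $X$, whereas the paper applies the chain rule first and bounds the pieces of $\partial_i F_j(X)$ directly; this is a matter of organisation, not substance.

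Where you are in fact more careful than the paper is the vanishing condition. The paper derives the bound
\[
\SNa{DF(X)}\leq C_1(\Np{\rho_0}+\SNa{\rho_0})\SNa{D\varphi_X}+C_2(1+\Np{D\varphi_X})^{1+\gamma}\SNa{\rho_0}
\]
and then asserts that the vanishing condition follows ``since $\varphi_X\in\LUPet$ and $\rho_0\in\LPet$''. But the term coming from $\SNa{S_{ij}\tilde\rho_0}$ uses the \emph{global} H\"older seminorm of $\tilde\rho_0$ and does not localise pointwise; the passage to $\LPet$ genuinely needs the density argument you spell out (approximate $\tilde\rho$ by smooth data, use boundedness of $S_{ij}$ on $\Lipa$, and the closedness of $\LPet$). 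So your ``main technical obstacle'' is exactly the point the paper glosses over, and your treatment of it is the right one.
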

\begin{proof}
We need to first verify that $$\Np{F(X)}+\sup_{i\in\left\{1,\dots,n\right\}}\Na{\frac{\partial}{\partial \alpha_i} F(X) } < \infty.$$

If we consider the change of variables $x'=X(\alpha')$ in \eqref{def:Functional} we get, for the $j-$component \begin{equation} \label{eq:Fj}
    F_j(X)(\alpha)=\int_{\R^n} k_j(X(\alpha)-x')\rho_0(X^{-1}(x'))dx'= (k_j*\Tilde{\rho}_0)\circ X(\alpha), 
\end{equation}
where $\Tilde{\rho}_0\coloneqq \rho_0 \circ X^{-1}.$ 

Following \cite[Section 3]{cantero2022cgamma} and setting $R_0=m(supp(\rho_0)) ^\frac{1}{n},$ one gets 
\begin{equation}\label{eq:infF}
    \Np{F(X)}\leq c R_0 \Np{D X} \Np{\rho_0}, 
\end{equation}
which is bounded for $X\in \LUPet$ and $\rho_0\in \LPet_c.$ Next, we focus on the norms of the derivatives of $F(X).$ We write $\partial_i=\frac{\partial}{\partial \alpha_i.}$ By definition of the norm we have $\Na{\partial_i F(X)}=\sup_{j\in\{1,\dots, n\}}\Na{\partial_iF_j(X)}.$ Thus, we work with $\partial_iF_j(X)$ for $i,j\in\{1,\dots, n\}$. Applying the chain rule to $F_j$ in \eqref{eq:Fj}, combined with Lemma \ref{lemma:diffKernel}, yields
\begin{equation}
    \partial_iF_j(X)(\alpha)=\sum_{r=1}^n(c_{rj}\rho_0(\alpha)+ p.v. (\partial_rk_j*\Tilde{\rho}_0)(X(\alpha)))\partial_iX_r(\alpha).
\end{equation}
Clearly, hypothesis in Lemma \ref{lemma:SIO} are satisfied if $P=\partial_rk_j,$ thus we can use \eqref{eq:SIOepsilon} to bound the $L^\infty$ norm of the derivatives:
\begin{equation}\label{eq:infGradF}
\begin{split}
\Np{\partial_iF_j(X)}&\leq\sum_{r=1}^n(|c_{rj}|\Np{\rho_0}+\Np{p.v. (\partial_rk_j*\Tilde{\rho}_0)(X)})\Np{\partial_iX_r}\\
&\leq \Np{D X} \sum_{r=1}^n(|c_{rj}|\Np{\rho_0}+\Np{p.v. (\partial_rk_j*\Tilde{\rho}_0)(X)})\\
&\leq n\Np{D X}\left(C\Np{\rho_0}+c\left\{\SNa{\Tilde{\rho}_0}+\max(1,\ln{R})\Np{\Tilde{\rho}_0}\right\}\right)\\
&\leq C (\Np{\rho_0} + \SNa{\Tilde{\rho}_0})
\end{split} 
\end{equation}

where the last step uses that $R=m(supp(\rho_0\circ X^{-1}))^\frac1n$ is bounded and $\Np{\Tilde{\rho}_0}=\Np{\rho_0}.$

To bound the Hölder seminorm we first use \eqref{eq:SNprod}
\begin{equation*}
\begin{split}
\SNa{\partial_iF_j(X)}&\leq \sum_{r=1}^n\Np{c_{rj}\rho_0+p.v. (\partial_rk_j*\Tilde{\rho}_0)(X)}\SNa{\partial_iX_r}\\&+\sum_{r=1}^n\SNa{c_{rj}\rho_0+p.v. (\partial_rk_j*\Tilde{\rho}_0)(X)}\Np{\partial_iX_r} \\&= I+ II
\end{split} 
\end{equation*}

The first term is estimated as in \eqref{eq:infGradF}: 
$$I\leq C(\Np{\rho_0}+\SNa{\Tilde{\rho}_0})\SNa{D X}\leq C(\Np{\rho_0}+\SNa{\Tilde{\rho}_0})\SNa{D\varphi_X}.$$
For the second term we use first \eqref{eq:SNaComp} and then \eqref{eq:SIOsna} from Lemma \ref{lemma:SIO}:
\begin{equation*}
    \begin{split}
        II&\leq \Np{D X}\sum_{r=1}^n |c_{rj}|\SNa{\rho_0}+ \SNa{p.v. (\partial_rk_j*\Tilde{\rho}_0) }\Np{D X}^\gamma \\
        &\leq \Np{D X}\sum_{r=1}^n |c_{rj}|\SNa{\rho_0}+ c\SNa{\Tilde{\rho}_0}\Np{D X}^\gamma \\
        &\leq C (\SNa{\rho_0}(1+\Np{D \varphi_X})+\SNa{\Tilde{\rho}_0}(1+\Np{D \varphi_X})^{1+\gamma} )
    \end{split}
\end{equation*}

Combining both estimates with the fact that $$\SNa{\Tilde{\rho}_0}=\SNa{\rho_0\circ X^{-1}}\leq \SNa{\rho_0}\Np{D X^{-1}}^\gamma, $$ we get 
\begin{equation}\label{eq:SNaGradF}
    \SNa{D F(X)}\leq C_1(\Np{\rho_0}+\SNa{\rho_0})\SNa{D \varphi_X}+C_2(1+\Np{D \varphi_X})^{1+\gamma} \SNa{\rho_0}.
\end{equation}

Combining \eqref{eq:infF}, \eqref{eq:infGradF}, \eqref{eq:SNaGradF} with the fact that $X\in U_\delta$ we get 
\begin{equation}\label{eq:NUaF}
    \NUa{F(X)}\leq C(\Np{\rho_0}+\SNa{\rho_0}).
\end{equation}

Finally, to show that $F(X)$ maps $U_\delta$ to $\LUPet$ it suffices now to observe that \eqref{eq:SNaGradF} yield
$$\lim_{h\to 0}\sup_{0<|x-y|<h}\frac{|D F(X)(x)-D F(X)(y)|}{|x-y|^\gamma}=0,$$
since $\varphi_X\in \LUPet$ and $\rho_0\in\LPet.$
\end{proof}

In order to apply Picard-Lindelöf theorem one must check now that the functional $F$ is a locally Lipschitz continuous mapping. As proved on \cite[Proposition 15] {cantero2022cgamma}, this is indeed the case.  It is important to note that the open set considered here is different to the one used on the mentioned paper; however, this does not affect the proof.

Furthermore, since the kernels considered may not be divergence-free, the standard argument used for the Euler equation becomes significantly more complex in this context, requiring a delicate treatment for the estimates.

\begin{prop}\label{prop:Lip}
Let $U_\delta$ as defined in \eqref{def:OpenSet}. Then, the functional $F:U_\delta \to \LUPet$ defined in \eqref{def:Functional} is locally Lipschitz. 
\end{prop}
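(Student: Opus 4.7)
The plan is to establish the local Lipschitz bound $\NUa{F(X)-F(Y)} \leq C\,\NUa{X-Y}$ for $X,Y\in U_\delta$, with $C$ depending on $\delta$ and $\Na{\rho_0}$. The naive attempt of working with the form $F_j(X)=(k_j*\tilde{\rho}_0^X)\circ X$ used in Proposition \ref{prop:MapsIntoCa} introduces the difference $\rho_0\circ X^{-1}-\rho_0\circ Y^{-1}$, which is only $\gamma$-Hölder in $X-Y$; this would not yield the Lipschitz regularity required by Picard--Lindelöf (Theorem \ref{th:Picard}). Instead, I would work with the original expression
\begin{equation*}
F_j(X)(\alpha) = \int_{\R^n} k_j(X(\alpha)-X(\alpha'))\,\rho_0(\alpha')\,\det DX(\alpha')\,d\alpha',
\end{equation*}
in which $\rho_0$ is untouched and all of the $X$-dependence is funneled through the kernel and the Jacobian, both of which depend smoothly on $X$.

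For the $L^\infty$ bound I would split
\begin{align*}
F_j(X)(\alpha)-F_j(Y)(\alpha) &= \int \bigl[k_j(X(\alpha)-X(\alpha'))-k_j(Y(\alpha)-Y(\alpha'))\bigr]\det DX(\alpha')\,\rho_0(\alpha')\,d\alpha' \\
&\quad + \int k_j(Y(\alpha)-Y(\alpha'))\bigl[\det DX(\alpha')-\det DY(\alpha')\bigr]\rho_0(\alpha')\,d\alpha'.
\end{align*}
The polynomial $\det$ is Lipschitz on the bounded set $\{DX:\Np{D\varphi_X}<\delta\}$, so that $\Np{\det DX-\det DY}\le C_\delta\Np{DX-DY}$; after the change of variables $z=Y(\alpha')$ the second integral becomes the operator $T$ of Lemma \ref{lemma:SIO} applied to a compactly supported Hölder function, giving the desired linear control. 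For the first integral, an interpolation along $\phi_t(\alpha,\alpha')=(1-t)(X(\alpha)-X(\alpha'))+t(Y(\alpha)-Y(\alpha'))$, combined with a symmetric cancellation between the $\alpha$ and $\alpha'$ contributions, rewrites the difference in terms of $\nabla k_j(\phi_t)\cdot[(X-Y)(\alpha)-(X-Y)(\alpha')]$; the resulting singular integrals with kernel homogeneous of degree $-n$ are controlled by Lemmas \ref{lemma:diffKernel} and \ref{lemma:SIO}.

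For the derivative estimates on $D(F(X)-F(Y))$ in $L^\infty$ and in the Hölder seminorm, I would differentiate using the formula
\begin{equation*}
\partial_i F_j(X)(\alpha)=\sum_{r=1}^n\Bigl[c_{rj}\rho_0(\alpha)+\mathrm{p.v.}(\partial_r k_j*\tilde{\rho}_0^X)(X(\alpha))\Bigr]\partial_i X_r(\alpha)
\end{equation*}
obtained in the proof of Proposition \ref{prop:MapsIntoCa}. Inserting the intermediate terms $\mathrm{p.v.}(\partial_r k_j*\tilde{\rho}_0^X)(Y)\,\partial_i Y_r$ and $\mathrm{p.v.}(\partial_r k_j*\tilde{\rho}_0^Y)(Y)\,\partial_i Y_r$ and applying the same reasoning as for $F$ itself, but with $\partial_r k_j$ (integrable after the symmetric cancellation) in place of $k_j$, reduces everything to singular integrals controlled by Lemma \ref{lemma:SIO}. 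The product and composition estimates \eqref{eq:SNaAlgebra}--\eqref{eq:NaComp} of Lemma \ref{lemma:propietats} then convert the pointwise and seminorm bounds into the full $\LUPet$ estimate.

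The main obstacle is maintaining linearity in $X-Y$ throughout the decomposition despite $\rho_0$ being only Hölder continuous; the workaround is that every $X$-dependence passes through the smooth objects $k$ and $\det DX$, never through $\rho_0$ itself. A second point requiring care is tracking the support radius of the compactly supported functions that arise, so that Lemma \ref{lemma:SIO} applies with constants depending only on $\delta$, $\Na{\rho_0}$, and $R_0$. The detailed bookkeeping (in a slightly different setting) is carried out in \cite[Proposition 15]{cantero2022cgamma}.
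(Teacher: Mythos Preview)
The paper does not give an independent proof of this proposition; it simply refers the reader to \cite[Proposition 15]{cantero2022cgamma}, noting that the slightly different choice of open set $U_\delta$ does not affect that argument and that the lack of a divergence-free condition on $k$ makes the estimates more delicate than in the Euler case. Your sketch follows the same route and ultimately invokes the same reference, so the proposal matches the paper's treatment.
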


 The last ingredient in order to proof \ref{th:apartatA} of Theorem \ref{th:enunciatGeneral} is the following Lemma, the proof of which can be seen in \cite[Lemma 2.2]{misiolek2018continuity}. It is worth noting that the proof, as presented on the mentioned paper, relies on the fact that one is working with functions in the little Hölder space.  

\begin{lemma} \label{lemma:contInversa}
    Let $0<\alpha<1.$ Suppose that $X,Y \in U_\delta.$ Then, the functions $X,Y \to X\circ Y$ and $X\to X^{-1}$ are continuous in the  $\LipUa$ norm topology.
\end{lemma}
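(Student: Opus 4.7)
The plan is to handle joint continuity of composition first, and then deduce continuity of inversion from it together with standard inverse-function-theorem estimates. The decisive analytical input, and precisely the reason the result is restricted to the \emph{little} Hölder class, will be the density of $C^\infty$ in $\LPet$ under the $\Na{\cdot}$-norm (noted at the end of Definition \ref{def:holder}); this substitutes for an unavailable second derivative of $X$.

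For composition, fix sequences $X_n\to X$ and $Y_n\to Y$ in $\LUPet$ and split
\begin{equation*}
X_n\circ Y_n - X\circ Y = (X_n - X)\circ Y_n + (X\circ Y_n - X\circ Y).
\end{equation*}
The first summand is handled by Lemma \ref{lemma:propietats}: \eqref{eq:NaComp} applied to $X_n-X$ and analogous estimates for its derivative (via \eqref{eq:SNprod} and \eqref{eq:SNaComp}) bound its $\LUPet$-norm by $C\,\NUa{X_n - X}$, using that $\NUa{Y_n}$ is uniformly bounded on $U_\delta$. For the second summand, the sup-norm part is clear from the Lipschitz continuity of $X$, and differentiation yields
\begin{equation*}
D(X\circ Y_n) - D(X\circ Y) = (DX\circ Y_n)(DY_n - DY) + (DX\circ Y_n - DX\circ Y)\,DY.
\end{equation*}
The first piece is controlled by \eqref{eq:SNprod} and \eqref{eq:NaComp} together with $\NUa{Y_n - Y}\to 0$. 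The crux is therefore to show $\Na{DX\circ Y_n - DX\circ Y}\to 0$.

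This is the main obstacle, because a direct estimate of the Hölder seminorm of $DX\circ Y_n - DX\circ Y$ would require double differences of $DX$, i.e.\ second-order regularity of $X$, which is not available. The idea is to use the little Hölder property: given $\varepsilon>0$, choose $g\in C^\infty(\R^n)$ with $\Na{DX - g}<\varepsilon$ and decompose
\begin{equation*}
DX\circ Y_n - DX\circ Y = \bigl((DX - g)\circ Y_n - (DX - g)\circ Y\bigr) + \bigl(g\circ Y_n - g\circ Y\bigr).
\end{equation*}
The first bracket has $\Na{\cdot}$-norm $O(\varepsilon)$ uniformly in $n$ by two applications of \eqref{eq:NaComp}. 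The second, via the fundamental theorem of calculus, equals $\bigl(\int_0^1 Dg((1-s)Y + sY_n)\,ds\bigr)\cdot (Y_n - Y)$, whose $\Na{\cdot}$-norm is bounded by $C_g\,\NUa{Y_n - Y}\to 0$ using \eqref{eq:SNaAlgebra} and \eqref{eq:NaComp} with $g$ fixed. Letting $n\to\infty$ and then $\varepsilon\to 0$ closes the argument.

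For continuity of inversion, the sup-norm estimate $\Np{X_n^{-1} - X^{-1}}\leq C\,\Np{X_n - X}$ follows from the identity $X_n^{-1}(X(\xi)) - \xi = X_n^{-1}(X(\xi)) - X_n^{-1}(X_n(\xi))$ combined with a uniform Lipschitz bound on $X_n^{-1}$, which is valid by a uniform application of the inverse function theorem on $U_\delta$. For the first derivative we use $A^{-1} - B^{-1} = A^{-1}(B - A)B^{-1}$ with $A = DX_n\circ X_n^{-1}$ and $B = DX\circ X^{-1}$: this reduces the problem to the $\Na{\cdot}$-continuity of $DX_n\circ X_n^{-1}$, which decomposes as $(DX_n - DX)\circ X_n^{-1}$ (bounded by $C\,\Na{DX_n - DX}$ via \eqref{eq:NaComp}) plus $DX\circ X_n^{-1} - DX\circ X^{-1}$, and the latter is treated by exactly the smooth-approximation argument used above for $DX\circ Y_n - DX\circ Y$.
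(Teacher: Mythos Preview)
The paper does not supply its own proof here; it cites \cite[Lemma~2.2]{misiolek2018continuity} and only remarks that the argument there hinges on the little H\"older hypothesis. Your proof is precisely that argument: the splitting of $X_n\circ Y_n-X\circ Y$, and above all the smooth-approximation device for controlling $\Na{DX\circ Y_n-DX\circ Y}$ via the density of $C^\infty$ in $\LPet$, are exactly the Misio\l ek--Yoneda strategy the paper is invoking.

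One point in the inversion step deserves to be made explicit. When you say that $DX\circ X_n^{-1}-DX\circ X^{-1}$ is ``treated by exactly the smooth-approximation argument used above'', note that the earlier argument closed by invoking $\NUa{Y_n-Y}\to 0$, whereas the analogous quantity here, $\NUa{X_n^{-1}-X^{-1}}$, is precisely what you are trying to establish. This is not a genuine gap: in the fundamental-theorem-of-calculus estimate for $g\circ X_n^{-1}-g\circ X^{-1}$ you in fact only need $\Na{X_n^{-1}-X^{-1}}\to 0$, and that follows from the already-proved $\Np{X_n^{-1}-X^{-1}}\to 0$ together with the uniform Lipschitz bound on the inverses via a standard interpolation (split according to $|x-y|\lessgtr r$). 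Spelling this out removes the apparent circularity.
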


\begin{proof}[Proof of Theorem \ref{th:enunciatGeneral} part \ref{th:apartatA}]
    Let $B=\LUPet(\R^n;\R^n)$ and $U_\delta$ defined in \eqref{def:OpenSet}. Then, by Propositions \ref{prop:MapsIntoCa} and \ref{prop:Lip} the functional $F$ defined in \eqref{def:Functional} satisfies the hypothesis of Picard-Lindelöf's theorem and therefore there exist $T>0$ and a unique  solution $X(\cdot,t)\in \LUPet$ to the ODE $\frac{d X}{d t}(\alpha,t)= F(X).$ Since, $\rho$ is transported by the flow $X$ it follows that there exists a solution $\rho\in\LPet_c$ to \eqref{eq:transportEq}. To see that the solution is unique, assume that $\Tilde{\rho}\in\LPet_c$ and $\Tilde{v}\in\LUPet$ is also a solution to \eqref{eq:transportEq} with the same initial data. Then we can find a trajectory $\Tilde{X}(\cdot,t)$ associated to $\Tilde{v}(\cdot,t)$ such that $\Tilde{\rho}$ is transported by $\Tilde{X}.$ But by uniqueness of the flow-map, $X=\Tilde{X}$ so $\Tilde{\rho}(\cdot,t)=\rho_0(\Tilde{X}^{-1}(\cdot,t))=\rho_0(X^{-1}(\cdot,t))=\rho(\cdot,t)$ and by convoluting with the kernel we also have that $\Tilde{v}=v.$

    We now focus on the dependence of the solutions on $\rho_0.$ Note that $\rho_0$ appears as a parameter on the right hand side of the ODE $$\frac{d X}{d t}=  \int_{R^n} k(X(\alpha,t)-X(\alpha',t))\rho_0(\alpha')\det\left(DX(\alpha',t)\right) d\alpha '=F_{\rho_0}(X)(\alpha).$$ 
    Moreover, since the dependence is linear on $\rho_0$ it follows that continuity of the map $\rho_0\to F_{\rho_0}$ is an immediate consequence of the estimate in \eqref{eq:NUaF}. Applying the theorem of continuous dependence on initial data of ODEs for Banach spaces we find that there exist $T > 0$ and a unique Lagrangian flow $X\in C([0,T),U_\delta)$ which depends continuously on $\rho_0$. 
    
    Next, take $\rho_0,\omega_0\in \LPet(\R^n;\R)$ and let $X(t)$ and $Y(t)$ be the corresponding Lagrangian flows solving the Cauchy problem in $U_\delta$. Given any $\varepsilon>0$ and using he fact that smooth functions are dense in $\LPet,$ we can choose $\phi_\varepsilon$ in $C^\infty(\R^n;\R)$ such that $$\Na{\phi_\varepsilon(x)-\rho_0}<\varepsilon.$$
    
    We estimate
    \begin{equation*}
        \begin{split}
            \Na{\rho-\omega}&=\Na{\rho_0(X^{-1}(x,t))-\omega_0(Y^{-1}(x,t))}\\
            &\leq \Na{\rho_0(X^{-1}(x,t))-\phi_\varepsilon(X^{-1}(x,t))}+\Na{\phi_\varepsilon(X^{-1}(x,t))-\phi_\varepsilon(Y^{-1}(x,t))}\\&+\Na{\phi_\varepsilon(Y^{-1}(x,t))-\omega_0(Y^{-1}(x,t)}=I+II+III
        \end{split}
    \end{equation*}
    
    $I$ is clearly bounded by $\varepsilon$. For the second term we have 
    \begin{equation*}
        \begin{aligned}
            II&\leq \int_0^1 \Na{\nabla \phi_\varepsilon(rX^{-1}-(1-r)Y^{-1})(X^{-1}-Y^{-1})}dr\\&\leq C \Na{\nabla \phi_\varepsilon \circ (rX^{-1}-(1-r)Y^{-1})} \Na{X^{-1}-Y^{-1}} \\
            & \leq C \NUa{ \phi_\varepsilon} \NUa{X^{-1}(x,t)-Y^{-1}(x,t)}
        \end{aligned}
    \end{equation*}
    where on the second inequality we have used the algebra properties of Hölder functions, \eqref{eq:SNaAlgebra}, and on the third one \eqref{eq:NaComp}. Now it is clear that $II$ converges to zero by continuity of the inversion map, see Lemma \ref{lemma:contInversa}, and the fact that $X$ converges to $Y$ in $\LUPet$ whenever $\rho_0$ converges to $\omega_0$ since the Lagrangian flows depend continuously on the initial vorticities. Finally, using \eqref{eq:NaCompInversa} we get
    \begin{equation*}
        \begin{split}
            III&\leq \Na{\phi_\varepsilon(Y^{-1}(x,t))-\rho_0(Y^{-1}(x,t)} +\Na{\rho_0(Y^{-1}(x,t))-\omega_0(Y^{-1}(x,t))}\\
            &\leq \Na{\phi_\varepsilon(Y^{-1}(x,t))-\rho_0(Y^{-1}(x,t)} +\Na{\rho_0-\omega_0}(1+\NUa{Y}^{\gamma (2n-1)})
        \end{split}
    \end{equation*}
    
    where the first term is bounded by $\varepsilon$ and the second term converges to 0 whenever $\rho_0$ converges to $\omega_0$.
\end{proof}

\section{Local well-posedness in the little Zygmund class, \texorpdfstring{$\lambda^*$}{lambda}} \label{sec:zygmund}
In this section, we address the well-posedness of the Cauchy problem \eqref{eq:transportEq} for functions in the little Zygmund class. For this case, the existence and uniqueness of solutions are also novel contributions. 

The Zygmund class was introduced by Zygmund in the 1940s when he observed that the conjugate
function of a Lipschitz function in the unit circle does not need to be Lipschitz but it is in
the Zygmund class \cite{Zygmund-1945}. 

\begin{definition}\label{def:Zyg}
    We define the Zygmund class, $\Lambda^*(\R^n;\R)$, consisting of all bounded continuous functions $f:\R^n\to\R$ for which $$\SNz{f}=\sup_{\substack{x,h\in\R^n\\ h\neq 0}}\frac{|f(x+h)-2f(x)+f(x-h)|}{|h|}<\infty,$$
    We equip $\Lambda^*$ with the norm $\Nz{f}\coloneqq \Np{f}+\SNz{f}<\infty$. 

    As in the definition of Hölder spaces, see definition \ref{def:holder}, one can extend the definition of the Zygmund class to functions $F:\R^n\to\R^d$. 

    We also define the little Zygmund class, $\ZPet(\R^n;\R^d)$ as the closed subspace of $\Lambda^*(\R^n;\R^d)$ consisting of those functions satisfying the vanishing condition $$\lim_{\delta\to 0} \sup_{x\in\R^n}\sup_{|h|<\delta} \frac{|f(x+h)-2f(x)+f(x-h)|}{|h|}=0,$$
    and as noted for the little Hölder spaces one can see that $\lambda^*(\R^n)$ is the clousure of $C^\infty(\R^n)$ in the usual Zygmund norm of $\Lambda^*(\R^n)$.
\end{definition}

It is well known that $\operatorname{Lip}(\R^n)\subset \Lambda^*(\R^n) \subset \Lipa(\R^n)$ for any $0<\gamma <1$ and, in fact, the Zygmund class is the natural substitute of the Lipschitz class in many different contexts. 

We now present a lemma that will be crucial later.

\begin{lemma}\label{lemma:Zyg}
    Let $f,g\in\Lambda^*$ and let $X\in \LipUa$, for some $0<\gamma<1,$ be an invertible transformation with $|\det \nabla X|\geq c_1 >0.$ Then 
    \begin{align}
        \Nz{fg}&\leq c \Nz{f}\Nz{g}, \label{eq:Zalgebra}\\
        \SNz{f\circ X}&\leq c \SNz{f}(1+ \NUa{X}). \label{eq:comp}
    \end{align}    
\end{lemma}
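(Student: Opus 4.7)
My overall approach is to expand each of the two second differences as a genuine Zygmund second difference of one factor plus a remainder involving first differences. Because Zygmund functions are not Lipschitz, those first-difference remainders must be handled via the classical Zygmund modulus of continuity $|f(x+s)-f(x)| \leq C\Nz{f}|s|\log(e/|s|)$ for $|s|\leq 1$ (obtained by dyadic iteration of the Zygmund condition) and the trivial $L^\infty$ bound for $|s|\geq 1$. In both parts of the lemma I treat the regimes $|h|\leq 1$ and $|h|\geq 1$ separately.

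For the algebra estimate \eqref{eq:Zalgebra} I use the purely algebraic identity
$aA-2bB+cC = (a-2b+c)A + (b-c)(A-C) + b(A-2B+C),$
applied with $a,b,c = f(x+h),f(x),f(x-h)$ and $A,B,C$ analogously for $g$. Dividing by $|h|$, the first and third terms give $\SNz{f}\Np{g}$ and $\Np{f}\SNz{g}$, each $\leq \Nz{f}\Nz{g}$. The cross term $(f(x)-f(x-h))(g(x+h)-g(x-h))/|h|$ is the delicate one: for $|h|\leq 1$ the Zygmund modulus on each factor yields a bound $C\Nz{f}\Nz{g}|h|\log^{2}(e/|h|)$, bounded on $(0,1]$; for $|h|\geq 1$ the trivial product bound $4\Np{f}\Np{g}$ divided by $|h|\geq 1$ is $\leq 4\Nz{f}\Nz{g}$. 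Combined with $\Np{fg}\leq\Np{f}\Np{g}$ this yields \eqref{eq:Zalgebra}.

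For the composition estimate \eqref{eq:comp} I set $w = X(x+h)-X(x)$ and decompose
\[
f(X(x+h)) - 2f(X(x)) + f(X(x-h)) = \bigl[f(X(x)+w)+f(X(x)-w)-2f(X(x))\bigr] + \bigl[f(X(x-h)) - f(X(x)-w)\bigr].
\]
The first bracket is a genuine Zygmund second difference of $f$ at $X(x)$ with increment $w$, and so is bounded by $\SNz{f}|w| \leq \SNz{f}\Np{DX}|h| \leq C\SNz{f}(1+\NUa{X})|h|$. For the second bracket the argument difference is $X(x+h)+X(x-h)-2X(x)$, which equals the second difference of $\varphi_X$; using the representation $\varphi_X(x+h)+\varphi_X(x-h)-2\varphi_X(x) = \int_0^1[D\varphi_X(x+th)-D\varphi_X(x-th)]\cdot h\,dt$ together with the $C^\gamma$ regularity of $D\varphi_X$, this is bounded by $C\NUa{X}|h|^{1+\gamma}$ for $|h|\leq 1$. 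The Zygmund modulus of continuity of $f$ then bounds the second bracket by $C\Nz{f}\NUa{X}|h|^{1+\gamma}\log(1/|h|)$, which after dividing by $|h|$ gives $C\Nz{f}\NUa{X}|h|^{\gamma}\log(1/|h|)$, uniformly bounded on $(0,1]$. For $|h|\geq 1$ the trivial estimate $2\Np{f}/|h|\leq 2\Np{f}$ suffices, and combining all pieces yields \eqref{eq:comp}.

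The main obstacle is the lack of a Lipschitz modulus of continuity for Zygmund functions: both decompositions leave a first-difference term that cannot be bounded linearly in the increment, and one must invoke the logarithmic Zygmund modulus $\omega_f(t)\leq C\Nz{f}\,t\log(e/t)$. The extra logarithmic factor is absorbed either by an additional first-difference factor (in the algebra identity, giving an extra power of $|h|$) or by the $|h|^\gamma$ gain provided by $X\in C^{1,\gamma}$ (in the composition identity), which is what makes the resulting bounds close cleanly.
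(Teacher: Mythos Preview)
Your proof is correct and follows essentially the same approach as the paper: for both inequalities you split off a genuine second difference and are left with a remainder controlled by first differences, which you then handle via the log-Lipschitz modulus of continuity of Zygmund functions, exactly as the paper does. The only cosmetic differences are that your product identity $aA-2bB+cC=(a-2b+c)A+(b-c)(A-C)+b(A-2B+C)$ is a tidier three-term variant of the paper's five-term decomposition, and in the composition estimate you take $w=X(x+h)-X(x)$ where the paper takes the symmetric increment $k=X(\alpha-h)-X(\alpha)$; your use of $\Nz{f}$ (rather than $\SNz{f}$) in the log-Lipschitz bound is in fact the more careful formulation of the same step.
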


\begin{proof}
    The fact that $\Lambda^*$ is an algebra, and therefore that the first inequality holds, is well known, see, for instance \cite[Chapter 4.6, Theorem 2]{RunstSickel1996}. However, we give here an alternative proof that was shown to us by J.J. Donaire.  

    Notice that 
    \begin{multline*}
        f(x+h)g(x+h)-2f(x)g(x)+f(x-h)g(x-h)=\\(f(x+h)-2f(x)+f(x-h))(g(x+h)-2g(x)+g(x-h))\\+f(x)(g(x+h)-2g(x)+g(x-h))+ g(x)(f(x+h)-2f(x)+f(x-h))\\-(f(x+h)-f(x))(g(x-h)-g(x))-(f(x-h)-f(x))(g(x+h)-g(x))
    \end{multline*}

    Thus, using the definition of the seminorm and the fact that Zygmund functions are lip-log we get \begin{multline*}
        |f(x+h)g(x+h)-2f(x)g(x)+f(x-h)g(x-h)|\leq\\ |h|(\SNz{f}\Np{g}+\Np{f}\SNz{g}) + c|h|^2\SNz{f}\SNz{g}\left(1+\log^2\frac{1}{|h|}\right)
    \end{multline*}
    which implies the first inequality since $\Np{fg}\leq \Np{f}\Np{g}.$ 
    
    We now prove the second inequality for small $h$, since otherwise one can just use the $L^\infty$ norm for the bounds. Notice that if we set $k=X(\alpha-h)-X(\alpha)$ we have
    \begin{multline*}
        |f(X(\alpha+h))-2f(X(\alpha))+f(X(\alpha-h))|\leq\\ |f(X(\alpha)+k)-2f(X(\alpha))+f(X(\alpha)-k)|+|f(X(\alpha+h)-f(X(\alpha)-k)| = I +II         
    \end{multline*}

    It is clear that $$I\leq \SNz{f}|k|=\SNz{f}|X(\alpha-h)-X(\alpha)|\leq \SNz{f} \Np{\nabla X} |h|$$
    and for $II$ we use again that Zygmund functions are lip-log 
    \begin{equation*}
        \begin{aligned}
        II&\leq C\SNz{f}|X(\alpha+h)-X(\alpha)+k|\log\frac{1}{|X(\alpha+h)-X(\alpha)+k|}\\
        &=C\SNz{f}|X(\alpha+h)-2X(\alpha)+X(\alpha-h)|\log\frac{1}{|X(\alpha+h)-2X(\alpha)+X(\alpha-h)|}\\ 
        &\leq C\SNz{f}|h|^{1+\gamma}\SNa{\nabla X}\log\frac{1}{\SNa{\nabla X}|h|^{1+\gamma}}\\
        &= C\SNz{f}|h||h|^{\gamma}\SNa{\nabla X}\left(\log\frac{1}{\SNa{\nabla X}|h|^{\gamma}}+\log \frac{1}{h}\right) \\
        &\leq C\SNz{f}|h|(1+\SNa{\nabla X})  
        \end{aligned}
    \end{equation*}
    where in the second inequality we use that $t\log \frac{1}{t}$ is increasing in $\left(0,\frac{1}{e}\right)$, together with the mean value theorem inequality and the that $X\in\LipUa$. The final inequality holds since $h$ is small enough, so that $|h|^\gamma\SNa{DX}\leq \frac{1}{e},$ and therefore, both terms  $|h|^{\gamma}\SNa{\nabla X}\log\frac{1}{\SNa{\nabla X}|h|^{\gamma}}$ and $|h|^{\gamma}\log \frac{1}{h}$ are bounded. Putting all together and using the definition of  $\NUa{X}$ we get the desired inequality.
\end{proof}
\begin{remark}
    Notice that if $f\in\ZPet$ and $X\in\LUPet$ for some $0<\gamma<1$ the proof of \eqref{eq:comp} also gives that $f\circ X \in \ZPet.$
\end{remark}


We finally prove \ref{th:apartatB}. Consider $\rho_0\in\ZPet.$ Since $\ZPet\subset \LPet$ for all $0<\gamma<1$ by the proof of Theorem \ref{th:enunciatGeneral} part \ref{th:apartatA} there exists a unique flow $X(\cdot,t)\in \LUPet$ for all $0<\gamma<1$, and also $X^{-1}(\cdot,t)\in \LUPet$. Since the solution is transported by the flow it follows by \eqref{eq:comp} from the previous lemma that the solution $\rho(\cdot,t)=\rho_0(X^{-1}(\cdot,t))$ is in $\ZPet.$ To verify that the solution depends continuously on the initial data, one can follow the argument used for the Hölder class, replacing the norm $\Na{\cdot}$ with $\Nz{\cdot}$. Notice that by Lemma \ref{lemma:Zyg} one can prove analogs of Lemmas \ref{lemma:propietats} and \ref{lemma:SIO} but for the Zygmund class. The only exception being \eqref{eq:SIOsna}, whose proof is more intricate and can be found in \cite[Corollary 2.4.2.]{grafakos2009modern}. With these results, and the space inclusion $\LUPet\subset \ZPet \subset \LPet,$ the final part of the proof of \ref{th:apartatA} can be replicated.

\bibliographystyle{plain} 
\bibliography{biblio}  

\end{document}